\theoremstyle{plain}
\newtheorem{propn}{Proposition}[section]
\newtheorem{thm}[propn]{Theorem}
\newtheorem{lemma}[propn]{Lemma}
\theoremstyle{definition}
\newtheorem{defn}[propn]{Definition}
\theoremstyle{remark}
\newtheorem*{rem}{Remark}
\numberwithin{equation}{section}
\begin{document}

\title{Numerically destabilizing minimal discs}
\author{Nicholas Brubaker}
\email{nbrubaker@fullerton.edu}

\author{Thomas Murphy}
\email{tmurphy@fullerton.edu}

\address{(BRUBAKER-MURPHY) Department of Mathematics, California State University Fullerton, 800 N. State College Blvd, Fullerton CA 92831, U.S.A.}

\author{K. Oskar Negron}

\email{koskar@csu.fullerton.edu}

\address{(NEGRON) Department of Physics, California State University Fullerton, 800 N. State College Blvd, Fullerton CA 92831, U.S.A.}

\begin{abstract} When calculating the index of a minimal surface, the set of  smooth functions on a domain  with compact support is the standard setting to describe admissible variations.  We show that the set of admissible variations can be widened in a geometrically meaningful manner by considering the difference of area functional, leading to a more general notion of index.  This  allows us to produce explicit examples of destabilizing perturbations for the fundamental Scherk surface and dihedral Enneper surfaces. In the case of dihedral Enneper surfaces we show that both the classical and our modified  index can be explicitly determined.
\end{abstract}

\maketitle

\section{ Introduction}

\footnotetext{ \textit{The authors thank the Dept. of Mathematics at Cal. State Fullerton for encouraging undergraduate student research, as well as Davi Maximo and David Wiygul for their interest and helpful comments.}}A program of central importance in submanifold geometry is to understand and classify  the submanifolds whose principal curvatures  satisfy a natural algebraic condition. A classical topic in this vein is the study of minimal surfaces in Euclidean space. In this paper we make a contribution to the study of the stability of such surfaces by explicitly identifying destablilizing perturbations for some well-known minimal discs. Our code can easily be adapted to any  given minimal disc of interest.  In the special situation of the Enneper surface of dihedral type we produce a numerical method which  completely determines the index. 
 
 \subsection{Background Material}
 
 Let $\Sigma \subset \mathbb{R}^3$ be an immersed surface in Euclidean three-dimensional space, and denote the induced metric by  $g$.  $\Sigma$ is said to be \emph{minimal} if its mean curvature  vector  $H$ vanishes. Such surfaces arise as critical points of the area functional, in a manner we now describe. 
 
 Throughout this paper $\Sigma$  will    be described with one coordinate patch with parameter domain taken to be a topological disc $\Omega\subset \mathbb{R}^2$:
 $$ \Sigma = \lbrace  \sigma(u,v): (u,v)\in \Omega.\rbrace$$

 From elementary calculus one can compute the normal vector field $\textbf{N}$ to $\Sigma$. This is assumed to exist globally (yielding a  \emph{two-sided} minimal disc) and it is further assumed that  $\textbf{N}$ extends smoothly over the boundary $\partial \Omega$.  A \emph{normal variation of} $\Sigma$ is then given by
 $$
  \sigma_t(u,v)   = \sigma(u,v) + t\varphi(u,v) \textbf{N}
 $$
 where $\varphi \in C^{\infty}(\Omega)$ is a bounded smooth function.  Usually in the literature it is assumed that $\varphi$  lies in the Hilbert space $H^1_0(\Omega)$. Geometrically, the corresponding variation will perturb the surface whilst holding the image of the parametrization fixed outside of a compact subset of $\Omega$.

 Given the induced metric  $g_t$ each surface $ \sigma_t(u,v) $ has area
$$
 A[\varphi, t] = \ \int_{\Sigma} \ dV_g = \ \int_{\Omega} \sqrt{ |{g}_t| }\ du \ dv.
 $$
 Here $|\cdot| : = \text{det} (\cdot)$. It is a standard computation that
 $$
 \frac{d}{dt} \bigg|_{t = 0} A[\varphi, t] = \ \ \int_{\Sigma} 2H \varphi \  dV_g.
 $$
 For this to vanish for all  admissible  variations, we need $H=0$. Thus minimal surfaces arise as critical points of the area functional acting on the space of admissible perturbations.  

 From this perspective it is natural to study the second variation of the area functional at a minimal surface. The second variation of $\Sigma$ corresponding to the compact perturbation  $\varphi$ is calculated as

\begin{align*}
 \frac{d^2}{dt^2} \bigg|_{t = 0} A[\varphi, t] =&  \int_{\Sigma} \| \nabla \varphi\|^2 + 2 \kappa\varphi^2 \  dV_g
  \\ = & \int_{\Omega}   \left(g^{ij}\varphi_i\varphi_j  + 2\kappa\varphi^2 \right) \ \sqrt{|g|} \ du \ dv
 \end{align*}
 where $\kappa$ is the Gaussian curvature.  This is a standard calculation in the subject: we refer the reader to the approach taken in  \cite{b}, \cite{kc2}, \cite{nitsche} as it will be relevant later.

 \begin{defn} A minimal surface $(\Sigma, g)$ is unstable if there exists an compact  perturbation $\varphi$ whose corresponding second variation is negative. The index $\mathrm{ind}_c(\Sigma)$  is the dimension of the subspace of the Hilbert space $\mathcal{H}^1_0$  on which the second variation is negative. 
   \end{defn}
 
Equivalently $\mathrm{ind}_c(\Sigma)$ is the index of the associated differential operator $L = -\Delta + 2\kappa$ acting on $\mathcal{H}^1_0$. If $\mathrm{ind}_c(\Sigma) =0$ the surface is said to be stable.  Note that the fact $\varphi$ has compact support has been utilized here to integrate by parts. Via the operator $L$, the Jacobi equation naturally appears, which is an important tool when explicitly calculating the index for a given minimal surface. We refer the reader to \cite{ft} for further details of this approach.

\section{Statement of Main results}

 \subsection{A modified index}
 
Let us suppose throughout that $\Omega$ is a bounded domain and that $\textbf{N}_u$, $\textbf{N}_v$, $\sigma_u$ and $\sigma_v$ are bounded on $\Omega$. Subscripts are used to denote partial differentiation, with the  exception of $t$ which will be used as  a subscript to denote the free parameter in the variation. All the explicit examples of minimal discs we consider will satisfy these assumptions. 

\begin{defn}
Denote by $\mathcal{F}(\Omega)$ the set of smooth functions  $\varphi\in \mathcal{H}^1(\Omega)$ such that
 all partial derivatives with order $|\alpha| \leq 2$ are bounded on $\Omega$, where  $\alpha$ is a multi-index. 
 \end{defn}

Clearly   $$\mathcal{H}^1_0(\Omega) \subset \mathcal{F}(\Omega) \subset \mathcal{H}^1(\Omega).$$ We will restrict to this subspace as the corresponding perturbations have the following geometric interpretation:

\begin{lemma} Let $\varphi\in \mathcal{F}(\Omega)$. Then:
\begin{enumerate}

\item  $$\sigma_t(u,v): \Omega\rightarrow \mathbb{R}^3$$  defines a family of immersions for $t$ sufficiently small.

\item
The difference in area
 $$
\left| A[\varphi, t] -  A[\varphi, 0]\right| = \int_{\Omega} \left|\sqrt{|g_t|} - \sqrt{|g|}\right| \ du \ dv $$ is finite   for sufficiently small $t$. Thus
 $\Sigma$  arises as a  critical point of the ``difference of area" functional on $\mathcal{F}$. 
 \item  The corresponding second variation is given by  $$\int_{\Omega} (g^{ij}\varphi_i\varphi_j + 2\kappa\varphi^2) \ \sqrt{|g|}   du\ dv$$ 
 \end{enumerate}
 \end{lemma}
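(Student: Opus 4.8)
The plan is to handle the three assertions in the order stated, since each builds on the previous one, and throughout to exploit the single structural fact that makes the widened class $\mathcal{F}(\Omega)$ work: the variation field $\varphi\mathbf{N}$ and its relevant derivatives are uniformly bounded on $\Omega$, so every geometric quantity that appears admits a bound independent of $(u,v)$.

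First I would expand the first fundamental form of $\sigma_t$. Differentiating $\sigma_t=\sigma+t\varphi\mathbf{N}$ gives $(\sigma_t)_u=\sigma_u+t(\varphi_u\mathbf{N}+\varphi\mathbf{N}_u)$, and similarly in $v$; taking inner products and using $\langle\sigma_i,\mathbf{N}\rangle=\langle\mathbf{N},\mathbf{N}_i\rangle=0$ yields an exact quadratic polynomial in $t$,
$$(g_t)_{ij}=g_{ij}-2t\varphi\, b_{ij}+t^2 c_{ij},$$
where $b_{ij}$ is the second fundamental form and the $c_{ij}$ are polynomial in $\varphi,\varphi_u,\varphi_v$ and the components of $\mathbf{N}_u,\mathbf{N}_v$. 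By the standing hypotheses and the definition of $\mathcal{F}(\Omega)$ every such coefficient is bounded, so $t\mapsto g_t$ is a continuous family of symmetric matrices converging \emph{uniformly} to $g$ as $t\to0$. Because $\sigma$ is an immersion and $\sigma,\mathbf{N}$ extend smoothly over $\partial\Omega$, the determinant $|g|$ attains a positive minimum on the compact set $\overline{\Omega}$; the uniform smallness of the perturbation then forces $|g_t|>0$ for every $|t|$ below a threshold independent of the point, which is exactly statement (1).

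For (2) I would write $\sqrt{|g_t|}-\sqrt{|g|}=(|g_t|-|g|)\big/(\sqrt{|g_t|}+\sqrt{|g|})$. The numerator is $O(t)$ with a bounded coefficient and the denominator is bounded below by part (1), so the integrand is dominated by $C|t|$; since $\Omega$ has finite Lebesgue measure the difference of area is finite. Criticality follows by differentiating under the integral sign — legitimate precisely because $\tfrac{\partial}{\partial t}\sqrt{|g_t|}$ is dominated uniformly in $t$ near $0$ by an integrable function — together with the identity $\tfrac{d}{dt}\big|_{0}\sqrt{|g_t|}=-2H\varphi\sqrt{|g|}$, obtained by reading off the linear coefficient of $|g_t|$ and using $g^{ij}b_{ij}=2H$; this vanishes since $H=0$.

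The substance is in (3), and the key is to compute $\tfrac{d^2}{dt^2}\big|_{0}\sqrt{|g_t|}$ directly from the polynomial expansion, with \emph{no} integration by parts. Writing $f(t)=|g_t|$ and using $f'(0)=0$ (minimality), one has $(\sqrt f)''(0)=f''(0)/(2\sqrt{|g|})$, so only the $t^2$-coefficient of $|g_t|$ is needed. Collecting it separates the answer into a gradient part $|g|\,g^{ij}\varphi_i\varphi_j$ and curvature terms; to reduce the latter I would invoke the third-fundamental-form identity, which for a minimal surface reads $\langle\mathbf{N}_i,\mathbf{N}_j\rangle=-\kappa\, g_{ij}$, together with $\det b=\kappa|g|$. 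These collapse the curvature contributions to $2\kappa\varphi^2|g|$, giving $(\sqrt f)''(0)=\sqrt{|g|}\,(g^{ij}\varphi_i\varphi_j+2\kappa\varphi^2)$, and a second application of differentiation under the integral sign produces the stated formula. The main obstacle is not the algebra but this interchange of $\tfrac{d}{dt}$ with $\int_\Omega$, which is where the defining bounds of $\mathcal{F}(\Omega)$ are indispensable; the conceptual payoff is that by extracting the quadratic form from the Taylor expansion of $\sqrt{|g_t|}$, rather than from an integrated-by-parts Jacobi operator, no boundary terms are ever generated, so the formula survives for perturbations that are merely bounded rather than compactly supported.
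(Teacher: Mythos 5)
Your route is the same as the paper's in substance: expand $g_t$ exactly as a polynomial in $t$, Taylor-expand $\sqrt{\lvert g_t\rvert}$ pointwise, and never integrate by parts, so that no boundary terms arise and compact support is never used. The paper's own proof is only a sketch (a MacLaurin expansion of $\sqrt{1+x}$ plus a citation to Beeson and Nitsche for ``the rest''), so you are in effect supplying the computation it outsources, and your algebra is correct: the $O(t)$ coefficient of $\det g_t$ is $-4H\varphi\lvert g\rvert$ and vanishes by minimality, and the reduction of the $t^2$-coefficient via $\det b = \kappa\lvert g\rvert$ and the Cayley--Hamilton identity $\langle \mathbf{N}_i,\mathbf{N}_j\rangle = -\kappa g_{ij}$ (valid when $H=0$) yields exactly $\sqrt{\lvert g\rvert}\,(g^{ij}\varphi_i\varphi_j + 2\kappa\varphi^2)$, as claimed.

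There is, however, one step that does not hold as you stated it: the assertion that $\lvert g\rvert$ attains a positive minimum on $\overline{\Omega}$, which you invoke both for the immersion property (1) and for the denominator bound in (2). This is not among the paper's standing hypotheses --- those bound $\sigma_u,\sigma_v,\mathbf{N}_u,\mathbf{N}_v$ from \emph{above} and assume only that $\mathbf{N}$, not $\sigma$ as an immersion, extends over $\partial\Omega$ --- and it is false for one of the paper's two examples: on the dihedral Enneper patch $\Omega(R)=(0,R)\times(0,2\pi)$ one has $\sqrt{\lvert g\rvert} = u\,(1+u^{2n})^2/4 \to 0$ as $u\to 0^+$, so there is no uniform lower bound and your threshold for $t$ degenerates near the polar axis. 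For part (2) the lower bound is avoidable: since minimality kills the $O(t)$ term, $\bigl|\lvert g_t\rvert - \lvert g\rvert\bigr| \le Ct^2$ uniformly, and the elementary inequality $\lvert\sqrt{a}-\sqrt{b}\rvert \le \sqrt{\lvert a-b\rvert}$ dominates the integrand by $\sqrt{C}\,\lvert t\rvert$ on the bounded domain $\Omega$ with no reference to $\inf\lvert g\rvert$. For part (1), though, some non-degeneracy genuinely is needed, and you should either state $\inf_{\overline{\Omega}}\lvert g\rvert>0$ as an explicit additional hypothesis or verify the required cancellation case by case (as in fact occurs for the Enneper patch, where $b_{ij}$ and the admissible $\varphi_v$ degenerate at the same rate as the metric). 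To be fair, the paper's own one-line claim that $\lvert g_t\rvert = \lvert g\rvert(1+x)$ with $x\to 0$ ``follows from our assumptions'' suffers from exactly the same unaddressed degeneration, so your attempt is no less rigorous than the original; but the step as written would fail precisely on the surfaces the paper studies.
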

 \begin{proof} 
 Using the MacLaurin series expansion
 $$
 \sqrt{1 + x} = 1 + x - \frac{1}{8}x^2 + O(x^3), 
 $$
we have to show that as $t\rightarrow 0$, 
 $$
 |g_t| = |g|(1+ x)
 $$
 where $x\rightarrow 0$. This follows from our assumptions on $\varphi$ and $\Omega$. 
The rest of the proof is analogous to the calculations in \cite{b} and \cite{nitsche}. 

 \end{proof}

 Hence it is  geometrically meaningful to consider the second variation  and the associated differential operator, whose index is denoted $\mathrm{ind}_b(\Sigma)$. Negative eigenvalues will still correspond to perturbations which decrease the area to second order. We will call such perturbations (and their corresponding normal variations) \emph{admissible}. A technical issue to bear in mind is that one cannot longer integrate by parts to obtain a Laplacian term in the formula for the second variation without also taking terms on the boundary into account. As such, we lose the link with the Jacobi equation and the conformal Euclidean Laplacian. Nevertheless, we will use Mathematica to directly calculate in local coordinates.

 \subsection{The fundamental Scherk surface}
\iftrue
\begin{figure}[ht]
\begin{center}
\advance\leftskip-3cm
\advance\rightskip-3cm
\includegraphics[keepaspectratio=true,scale=0.3]{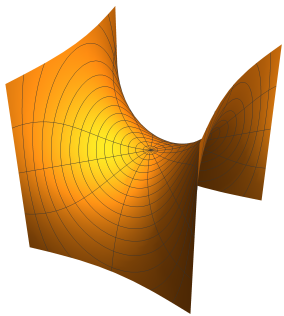}
\caption{The fundamental Scherk surface}
\end{center}\end{figure}
\fi

 Our first result concerns the fundamental (doubly-periodic) Scherk surface. We choose the standard parametrization $ \sigma(u,v) =$
 $$
 \left( \tan^{-1}\left(\frac{2 u}{(u^2 + v^2 - 1}\right), \tan^{-1}\left(\frac{-2v}{u^2 + v^2 - 1}\right), \log\sqrt{\frac{u^2 - v^2 + 1)^2 + 4 u^2 v^2}{(u^2 - v^2 - 1)^2 + 
   4 u^2 v^2}}\right),
 $$
 where $u, v\neq \pm 1$. 
Denote by
 $$
 \Omega_0 = \lbrace (u,v) : -1 < u < 1, \ \ -1 < v< 1 \rbrace
 $$
 the fundamental domain: 
 the surface $\Sigma_0$ given by $\sigma: \Omega_0 \rightarrow \mathbb{R}^3$ is  the \emph{fundamental Scherk surface}. The usual Scherk surface can be viewed as an infinite array of copies of this fundamental building block, one arranged over each square in a checkerboard pattern.

 It is  known \cite{ft} that $ind_{c}(\Sigma_0) = 0$.  Considering therefore the wider class of functions $\mathcal{F}(\Omega_0)$, we have the following: 
 
 \begin{thm}\label{t1} The admissible perturbations $\varphi^i \in\mathcal{F}(\Omega_0)$ associated to the orthogonal test functions 
 \begin{align*}
\varphi^1 &=   (uv)^2+2, \\
\varphi^2 &= u - \frac{1}{4}u^5,\ \ \text{ and}\\
\varphi^3 &= v - \frac{1}{4}v^5
\end{align*}
all destabilize the fundamental Scherk surface.  Moreover,
$\mathrm{ind}_b(\Sigma_0) \geq 3.$

 \end{thm}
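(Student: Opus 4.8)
The plan is to work directly with the second-variation quadratic form from part (3) of the Lemma,
$$Q[\varphi] = \int_{\Omega_0}\bigl(g^{ij}\varphi_i\varphi_j + 2\kappa\varphi^2\bigr)\sqrt{|g|}\ du\ dv,$$
and its associated symmetric bilinear form $B(\varphi,\psi)$. To conclude $\mathrm{ind}_b(\Sigma_0)\ge 3$ it suffices to exhibit a three-dimensional subspace of $\mathcal{F}(\Omega_0)$ on which $Q$ is negative definite, and the natural candidate is $V=\operatorname{span}\{\varphi^1,\varphi^2,\varphi^3\}$. The whole argument then reduces to showing that the $3\times 3$ Gram matrix $M_{ij}=B(\varphi^i,\varphi^j)$ is negative definite. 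First I would record that each $\varphi^i$ genuinely lies in $\mathcal{F}(\Omega_0)$: they are polynomials, hence smooth with all partial derivatives bounded on the bounded square $\Omega_0$, and by the Lemma the integrals defining $B$ are finite. Crucially these functions do \emph{not} vanish on $\partial\Omega_0$, so they live outside $\mathcal{H}^1_0(\Omega_0)$; this is exactly what makes negativity compatible with the known fact $\mathrm{ind}_c(\Sigma_0)=0$.

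The key simplification is symmetry. The reflections $u\mapsto -u$ and $v\mapsto -v$ are each realized by a rigid motion $R$ of $\mathbb{R}^3$ satisfying $R\circ\sigma(u,v)=\sigma(-u,v)$ (respectively in $v$), so each is an isometry of $\Sigma_0$. Differentiating this relation shows that $g^{11}\sqrt{|g|}$, $g^{22}\sqrt{|g|}$, and the curvature coefficient $\kappa\sqrt{|g|}$ are even in both $u$ and $v$, while the off-diagonal coefficient $g^{12}\sqrt{|g|}$ is odd in each variable. The three test functions carry definite parity: $\varphi^1$ is even--even, $\varphi^2$ is odd--even and independent of $v$, and $\varphi^3$ is even--odd and independent of $u$. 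Feeding these parities through each term of the integrand of $B(\varphi^i,\varphi^j)$ for $i\ne j$, one checks that every surviving term is odd in at least one variable and hence integrates to zero over the symmetric domain $\Omega_0$; therefore $M$ is diagonal. Moreover the interchange $u\leftrightarrow v$ is itself an isometry of $\Sigma_0$ carrying $\varphi^2$ to $\varphi^3$, so $M_{33}=M_{22}$ and only two diagonal entries must be evaluated.

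It then remains to verify $M_{11}=Q[\varphi^1]<0$ and $M_{22}=Q[\varphi^2]<0$. Here I would compute the metric coefficients, $\sqrt{|g|}$, and the Gaussian curvature $\kappa$ directly from the given parametrization in Mathematica; since this parametrization is conformal the gradient term collapses to $\varphi_u^2+\varphi_v^2$ and the potential to $2\kappa\sqrt{|g|}$, after which the two integrals over $\Omega_0$ can be evaluated and each checked to be negative. With $M=\operatorname{diag}(M_{11},M_{22},M_{22})$ having all negative diagonal entries, $Q$ is negative definite on $V$, which simultaneously shows that each $\varphi^i$ destabilizes $\Sigma_0$ and yields $\mathrm{ind}_b(\Sigma_0)\ge 3$.

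The main obstacle is the evaluation of these two integrals. The conformal factor and curvature of this Scherk parametrization are unwieldy transcendental expressions, and the metric degenerates at the boundary points $(\pm 1,0)$ and $(0,\pm 1)$, where the surface runs off to infinity. The real work is confirming that the integrands remain genuinely integrable up to $\partial\Omega_0$ (guaranteed in general terms by the Lemma, but needing care here), and that the resulting numerical or symbolic value of each integral is robustly negative rather than a near-zero artifact of the degeneracy.
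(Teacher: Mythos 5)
Your proposal is correct and follows the same overall route as the paper: evaluate the second-variation integrals $Q[\varphi^i]$ in Mathematica (the paper's code returns $-57.04$ for $\varphi^1$ and $-0.05$ for each of $\varphi^2,\varphi^3$) and then show $Q$ is negative definite on $\operatorname{span}\{\varphi^1,\varphi^2,\varphi^3\}$ by killing the cross terms. Where you differ is in how the cross terms are handled, and your version is the more complete one. The paper disposes of them in one line, asserting that the off-diagonal integrals vanish ``because $\sigma$ is an isothermal coordinate patch''; but isothermality only reduces the gradient cross term to the unweighted $\varphi^k_u\varphi^m_u+\varphi^k_v\varphi^m_v$ (and makes it vanish identically only for the pair $(\varphi^2,\varphi^3)$, whose mixed derivative products are zero), and it says nothing about the curvature cross terms $2\kappa\varphi^k\varphi^m\sqrt{|g|}$, which the paper's displayed expansion omits altogether. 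Your parity argument---the reflections $u\mapsto -u$ and $v\mapsto -v$ are induced by rigid motions of $\mathbb{R}^3$, so $\sqrt{|g|}$ and $\kappa\sqrt{|g|}$ are even in each variable while every cross integrand is odd in $u$ or in $v$---is exactly what is needed to justify diagonality of the Gram matrix, curvature terms included, so you have in effect repaired a sketchy step rather than merely reproduced it. Your additional observation that $u\leftrightarrow v$ is realized by the rigid motion $(x,y,z)\mapsto(-y,-x,-z)$, giving $Q[\varphi^2]=Q[\varphi^3]$ for free, is consistent with the paper's computed values and halves the numerical work. The one piece you leave pending---verifying numerically that $Q[\varphi^1]<0$ and $Q[\varphi^2]<0$, with attention to the degeneracies at $(\pm1,0)$ and $(0,\pm1)$ where the surface runs off to infinity---is precisely what the paper's code carries out, and your caution that the value $-0.05$ must be confirmed robust against those boundary singularities is well placed, since the paper itself addresses this only by remarking that the integrands are rational in the test-function data.
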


\begin{rem} 
Our proof uses Mathematica to calculate explicitly. As the test functions are algebraic the computation will consist of integrating rational functions, and thus we can be confident of the accuracy of the results.  
\end{rem}

\subsection{The dihedral Enneper surfaces $\Sigma_1^n$(R)}

 As an application of our techniques we will consider  the dihedral Enneper surface of order $n$ and radius $R$, labelled $\Sigma_1^n(R)$.  This has $n+1$ ``ripples": our goal is to numerically describe destabilizing perturbations of $\mathrm{ind}_b(\Sigma_1^n)$ which are related to the number of ripples.

 Here the coordinate patch is given by the  parametrization 
$$
\sigma(u,v) = \bigg(\frac{u\beta\cos(v) - u^{\beta}\cos(\beta v)}{2\beta}, \frac{-u\beta\sin(v) - u^{\beta}\sin(\beta v)}{2\beta},
 \frac{u^{\beta}\cos(\beta v)}{\beta}  \bigg)\\
$$
where $\beta = 1 +n$.
\noindent Here  $v\in (0,2\pi)$ and $0<u < R$, with $R$ for now chosen small enough to avoid self-intersections.   To illustrate how  may be used to find destabilizing perturbations, we chose the dihedral Enneper surface with $n=5$. 
\iftrue
\begin{figure}[ht]
\begin{center}
\includegraphics[width=.5\textwidth,trim={0 3.1cm 0 3.1cm},clip]{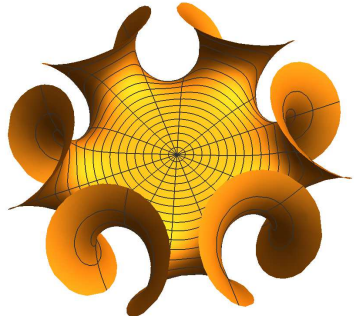}
\caption{An Enneper surface with fifth-order dihedral symmetry.}
\end{center}
\end{figure}
\fi

As a test function, we choose a bump function in the $v$ coordinates restricted to a ``slice" of the surface multiplied by $u$. 
 
 \begin{thm}\label{t2}
For  the surface $\Sigma_1^5(R_0)$,  with $R_0=1.1619...$, let $\chi(v)$ be  the bump function with parameters given by Equation (\ref{bfe}) with $v\in (0, \frac{\pi}{3})$. Then $$\varphi(u,v)  = u\chi(v)$$ destabilizes $\Sigma_1^5(R_0)$. Considering similar test functions where $v$ is successively chosen to range over $(\frac{\pi}{3}, \frac{2\pi}{3}), \dots, (\frac{5\pi}{6}, 2\pi)$, it follows that 
 $$
 \mathrm{ind}_b(\Sigma_1^5(R_0)) \geq 6. 
 $$
 
 \end{thm}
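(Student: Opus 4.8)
The plan is to exhibit a six-dimensional subspace of $\mathcal{F}(\Omega)$ on which the second-variation form given by part (3) of the Lemma,
\[
Q(\varphi)=\int_{\Omega}\bigl(g^{ij}\varphi_i\varphi_j+2\kappa\varphi^2\bigr)\sqrt{|g|}\;du\,dv,
\]
is negative definite; write $B(\cdot,\cdot)$ for its associated symmetric bilinear form. The six test functions are the rotates $\varphi_k=u\,\chi\bigl(v-\tfrac{(k-1)\pi}{3}\bigr)$, $k=1,\dots,6$, each supported in one of the angular sectors listed in the statement. I would first check that each $\varphi_k$ lies in $\mathcal{F}(\Omega)$ but not in $\mathcal{H}^1_0(\Omega)$: the factor $u$ kills the perturbation at the center $u=0$, the compact support of $\chi$ kills it on the two radial edges of its sector, and all derivatives up to order two are bounded, yet $\varphi_k$ is genuinely nonzero on the outer rim $u=R_0$. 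Admissibility --- that $\sigma_t$ is a family of immersions and that the difference-of-area functional is finite --- then follows from parts (1)--(2) of the Lemma, and it is exactly this nonvanishing on the rim that lets $\varphi_k$ contribute to $\mathrm{ind}_b$ while remaining invisible to $\mathrm{ind}_c$.

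Next I would have Mathematica produce the first fundamental form $E,F,G$, the area element $\sqrt{|g|}$, and the Gaussian curvature $\kappa$ from the explicit parametrization; these are elementary in $u^{\beta-1}=u^{5}$ and trigonometric in $v$. Two structural facts guide the rest. First, the metric coefficients are in general not invariant under the rotation $v\mapsto v+\pi/3$ carrying one ripple to the next (one verifies directly from the closed form that they are not $\tfrac{\pi}{3}$-periodic), so a priori the six sectors are inequivalent and each must be treated on its own. Second, the substitution $v\mapsto -v$ realizes the Euclidean reflection $(X,Y,Z)\mapsto(X,-Y,Z)$ of the surface, hence is an isometry of $g$ that leaves $\kappa$ invariant; it carries the $k$-th sector to the $(7-k)$-th, so for a bump placed symmetrically in each sector one has $Q(\varphi_k)=Q(\varphi_{7-k})$ and the number of independent evaluations drops from six to three.

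The computational heart is to evaluate $Q(\varphi_1)$ at $R_0=1.1619\ldots$ and verify it is strictly negative. Substituting $\varphi_u=\chi(v)$ and $\varphi_v=u\,\chi'(v)$ turns the integrand into the explicit metric data weighting $\chi^2$, $u\chi\chi'$, $u^2(\chi')^2$, and $u^2\chi^2$, and $Q(\varphi_1)$ is the resulting double integral over the first sector, which I would compute by numerical quadrature. Running the same computation on the second and third sectors (the remaining three then following by the reflection) yields $Q(\varphi_k)<0$ for all six $k$. The role of the radius $R_0$ is that it is large enough for the nonpositive curvature term $2\kappa\varphi_k^2$ to overcome the gradient energy $g^{ij}(\varphi_k)_i(\varphi_k)_j$ and drive $Q$ below zero.

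The conclusion is then linear algebra. Because the six bumps are supported in pairwise disjoint compact subsets of the sectors, the supports of the $\varphi_k$ and of all their first partial derivatives are disjoint, so the integrand defining $B(\varphi_k,\varphi_\ell)$ vanishes identically whenever $k\neq\ell$. Hence for any $\varphi=\sum_{k}a_k\varphi_k$ one has $Q(\varphi)=\sum_{k}a_k^2\,Q(\varphi_k)$, which is strictly negative unless every $a_k=0$; thus $\{\varphi_k\}_{k=1}^{6}$ spans a six-dimensional negative-definite subspace of $\mathcal{F}(\Omega)$ and $\mathrm{ind}_b(\Sigma_1^5(R_0))\geq 6$. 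I expect the main obstacle to be this numerical step rather than the algebra: unlike the Scherk computation, where the algebraic test functions give rational integrands whose exact values pin down the sign, here $\chi$ is a genuine bump function and each $Q(\varphi_k)$ is a transcendental integral, so the whole argument rests on choosing the bump parameters in Equation (\ref{bfe}) well and controlling the quadrature error tightly enough that the reported negative sign is beyond doubt.
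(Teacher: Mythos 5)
Your overall route is the paper's: numerically evaluate the second--variation form $Q$ on a bump-in-$v$ test function supported in one sector (the paper computes a single integral, $\approx -3.65$, for the parameters of Equation \eqref{bfe}), transport it to the other five sectors, and use disjointness of supports to conclude that $Q\bigl(\sum_k a_k\varphi_k\bigr)=\sum_k a_k^2\,Q(\varphi_k)<0$, hence $\mathrm{ind}_b(\Sigma_1^5(R_0))\geq 6$. That closing linear algebra is exactly the paper's, and your observation that each $\varphi_k$ lies in $\mathcal{F}(\Omega)$ but not in $\mathcal{H}^1_0(\Omega)$ because of its nonzero rim values is the right reason these perturbations are visible to $\mathrm{ind}_b$.

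However, your first ``structural fact'' is false, and the verification you propose there would come out the opposite way. For $\Sigma_1^n$ the Weierstrass data are $f=1$, $g(w)=w^n$, so the induced metric in the polar chart is conformal with factor depending only on $u=|w|$: explicitly $E=\tfrac14(1+u^{2n})^2$, $F=0$, $G=u^2E$, and $\kappa=-16n^2u^{2n-2}/(1+u^{2n})^4$ --- precisely the expressions appearing in the paper's MATLAB code, none of which involves $v$. You may have been misled by the fact that the ambient surface in $\mathbb{R}^3$ has only dihedral symmetry; but even so, writing $x+iy=\tfrac{1}{2(2n+1)}\bigl[(2n+1)u e^{-iv}-u^{2n+1}e^{i(2n+1)v}\bigr]$ shows that $v\mapsto v+\tfrac{2\pi}{n+1}$ multiplies both terms by the same phase, i.e.\ it realizes an ambient rotation about the vertical axis, so even the surface is invariant sector-to-sector. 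More to the point, the integrand of $Q$ depends only on $(g,\kappa)$ in the chart, which is invariant under \emph{every} rotation $v\mapsto v+c$; this full rotational invariance is exactly the ``symmetry considerations'' by which the paper obtains all six sectors from a single quadrature. Your plan of three independent quadratures plus the (correct, but subsumed) reflection $v\mapsto -v$ is therefore wasted effort resting on a claimed non-periodicity that direct computation refutes. The error is not fatal --- your sector-by-sector numerics would simply return six equal negative values and the conclusion stands --- but the ``one verifies directly from the closed form'' step cannot be carried out as stated and should be replaced by the observation that $g$ and $\kappa$ are radial.
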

 
 The value of $R_0$ is chosen as it is the maximum radius for which the surface remains embedded. 
 
\begin{rem} In the given Mathematica code, we used dynamic variables for the bump function. As one changes the parameters of the bump function the integral recalculates automatically. This makes it easy to search for a bump function which will work, and easily adapts to any $\Sigma_1^n(R)$. We also encoded the order $n$ of the minimal surface into the calculation, and moreover we restricted the support of the bump function $\chi(v)$ to lie in $(0,\frac{2\pi}{n+1})$. 
\end{rem}
 
 \subsection{Determining the index of $\Sigma_1^n$}
 
 For the Enneper surfaces $\Sigma_1^n(R)$ we  explicitly calculate both $\mathrm{ind}_c(\Sigma^n_1(R))$ and $\mathrm{ind}_b(\Sigma^n_1(R))$  (for any value of $R$) by recasting the problem as a Dirichlet or Neumann boundary value problem for the elliptic differential operator $L$. Our approach is successful for this surface because the patch is in polar coordinates and the singularity of the Laplacian  in these coordinates can be effectively handled via a bordering strategy.
 
 Following the standard conventions (see e.g. \cite{chomax} and \cite{fcs}), define
 \begin{align*}
 \mathrm{ind}_c(\Sigma_1^n) :&= \lim_{R\rightarrow \infty} \mathrm{ind}_c\left(\Sigma_1^n(R)\right)\\
 \mathrm{ind}_b(\Sigma_1^n) :&= \lim_{R\rightarrow \infty} \mathrm{ind}_b\left(\Sigma_1^n(R)\right)\
 \end{align*}
 
 \begin{thm}\label{t3} For the Enneper surface $\Sigma^5_1$, 
 \begin{enumerate}
 \item $\mathrm{ind}_c(\Sigma^5_1) = 9$, and
 \item $\mathrm{ind}_b(\Sigma_1^5) = 11$.
 \end{enumerate}
 \end{thm}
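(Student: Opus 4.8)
The plan is to reduce the computation of each index to counting the negative eigenvalues of the Jacobi-type operator $L=-\Delta_g+2\kappa$, and then to exploit the fact that $\Sigma_1^5$ is conformally parametrized in polar coordinates to carry out that count mode by mode. First I would record that the Weierstrass data of $\Sigma_1^n$ make the induced metric of the form $g=\lambda(u)^2\,(du^2+u^2\,dv^2)$ with $\lambda$ a function of the radial variable $u$ alone, so that the conformal factor and the Gaussian curvature are rotationally invariant. In these coordinates the second variation form becomes
\[
Q(\varphi)=\int_0^{2\pi}\!\!\int_0^R\Bigl(u\,\varphi_u^2+\tfrac{1}{u}\varphi_v^2+V(u)\,u\,\varphi^2\Bigr)\,du\,dv,\qquad V(u)=2\kappa\lambda^2,
\]
and counting negative directions of $Q$ is exactly counting the negative eigenvalues of the associated operator with the boundary data that encodes the admissible class: the Dirichlet condition $\varphi|_{u=R}=0$ (compact support, $\mathcal H^1_0$) yields $\mathrm{ind}_c$, while the natural Neumann condition at $u=R$ (the enlarged class $\mathcal F$) yields $\mathrm{ind}_b$.

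Because $V$ depends on $u$ alone, the operator commutes with $\partial_v$, so I would separate variables through the Fourier basis $\{1,\cos mv,\sin mv\}_{m\ge1}$ on $v\in(0,2\pi)$. Each mode $m$ reduces $Q$ to the singular radial form whose Euler--Lagrange operator is
\[
L_m f=-f''-\tfrac{1}{u}f'+\tfrac{m^2}{u^2}f+V(u)f,\qquad u\in(0,R),
\]
a Sturm--Liouville problem with weight $u$. The regular-singular point $u=0$ has indicial exponents $\pm m$ (and $1,\log u$ when $m=0$), and membership in the form domain selects the solution behaving like $u^{m}$, i.e. $f(0)=0$ for $m\ge1$ and $f'(0)=0$ for $m=0$; imposing this admissible branch at the singular endpoint is precisely the bordering step. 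At $u=R$ one imposes $f(R)=0$ (Dirichlet) or $f'(R)=0$ (Neumann). Writing $N_m$ for the number of negative eigenvalues of $L_m$ under the chosen endpoint condition, the orthogonality of the Fourier modes gives
\[
\mathrm{ind}=N_0+2\sum_{m\ge1}N_m,
\]
the factor $2$ coming from the $\cos/\sin$ pair.

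It remains to evaluate the $N_m$ and pass to the limit. Since $V\,u$ is integrable (the complete surface has finite total curvature), for $m$ large the centrifugal term $m^2/u^2$ forces $L_m$ to be positive, so only finitely many modes contribute and the sum is finite; a standard monotonicity/continuity argument shows each $N_m$ is nondecreasing in $R$ and stabilizes, which is what legitimizes the limits defining $\mathrm{ind}_c(\Sigma_1^5)$ and $\mathrm{ind}_b(\Sigma_1^5)$. For fixed $m$ I would compute $N_m$ by the oscillation/nodal count of the solution of $L_mf=0$ started from the admissible branch at $u=0$, under both endpoint conditions, and check that the Dirichlet totals sum to $9$ and the Neumann totals to $11$. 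The relaxation from Dirichlet to Neumann only increases eigenvalue counts, so the discrepancy $\mathrm{ind}_b-\mathrm{ind}_c=2$ should localize to a single mode $m\ge1$ that acquires one extra negative eigenvalue, its $\cos/\sin$ pair accounting for the $+2$.

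I expect the principal obstacle to be the rigorous treatment of the singular endpoint $u=0$: one must verify that the two admissible classes $\mathcal H^1_0$ and $\mathcal F$ translate precisely into the indicial condition above together with Dirichlet, respectively Neumann, data at $u=R$, and that the bordering regularization neither creates nor annihilates negative eigenvalues. A secondary point is certifying the eigenvalue counts themselves---ensuring the numerics miss no near-zero eigenvalue and produce no spurious one---so that the integer totals $9$ and $11$ can be asserted with confidence; here the explicitness of $V$ and the regularity of $L_m$ away from $u=0$ are what make the count trustworthy.
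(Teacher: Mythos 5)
Your overall reduction --- counting negative eigenvalues of $L=-\Delta+2\kappa$ with Dirichlet data at $u=R$ for $\mathrm{ind}_c$ and Neumann data for $\mathrm{ind}_b$, with the regular branch imposed at the coordinate singularity $u=0$ --- is exactly the paper's framework, and your radial potential $V(u)=2\kappa\lambda^2=-8n^2u^{2n-2}/(1+u^{2n})^2$ matches the coefficient in \eqref{eq:EvalPDE}. Where you diverge is in the execution: the paper explicitly notes the Fourier reduction to a countable family of singular Sturm--Liouville problems but then abandons it, discretizing the full two-dimensional problem with a Chebyshev--Fourier pseudospectral method, handling $u=0$ and $u=R$ by a bordering strategy, and tracking the eleven smallest eigenvalues of the generalized matrix problem as functions of $R$; the Dirichlet crossings are then certified exactly by setting $\lambda=0$, separating variables, and finding roots of polynomials in $R$. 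Your mode-by-mode oscillation count is a genuinely different and arguably cleaner route (it is, in effect, a systematic version of the paper's closing $\lambda=0$ remark), and the finiteness of the mode sum is right: $u^2\lvert V\rvert\le 2n^2$, so all modes with $m\ge 8$ are positive when $n=5$.

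There is, however, one genuine gap. Your claim that ``a standard monotonicity/continuity argument shows each $N_m$ is nondecreasing in $R$ and stabilizes'' is valid only for the Dirichlet counts, where extension by zero embeds the form domain over a smaller disc into that over a larger one. For the Neumann problem no such inclusion exists, negative eigenvalues can be destroyed as $R$ grows, and the paper exhibits precisely this failure: $\mathrm{ind}_b(\Sigma_1^{11}(R))$ is non-monotonic in $R$ (Figure \ref{fig:diEn_NeuIndexN11}). So your stabilization argument breaks down exactly in the case needed to make sense of $\mathrm{ind}_b(\Sigma_1^5)=\lim_{R\to\infty}\mathrm{ind}_b(\Sigma_1^5(R))$; to repair it you must count \emph{signed} crossings --- e.g.\ track the zeros in $R$ of $f_m'(R)$, where $f_m$ solves $L_mf=0$ from the admissible branch at $u=0$, recording the direction in which each eigenvalue crosses zero --- and then verify separately that no further crossings occur beyond the computed window, which is what the paper's Figure \ref{fig:diEn_NeuIndex} does numerically for $n=5$. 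Two smaller points: your localization of the discrepancy $\mathrm{ind}_b-\mathrm{ind}_c=2$ in a single $\cos/\sin$ pair is unsupported (both totals being odd, the extra $2$ could equally sit in the $m=0$ mode; the claim is also unnecessary, so drop it or read it off from the mode-wise counts), and as written your argument establishes the reduction but defers the actual totals --- the integers $9$ and $11$ only follow once the oscillation counts for $m=0,\dots,7$ are actually carried out under both endpoint conditions.
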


Our code can be adopted to any $\Sigma_1^n$. Numerical experimentation as $n\rightarrow \infty$ suggests the conjecture that
$$
 \mathrm{ind}_c(\Sigma^n_1) = 2n-1, \qquad
 \mathrm{ind}_b(\Sigma_1^n) = 2n+1
$$
One can also determine the corresponding eigenfunctions numerically from our program. 

\subsection{Graphing minimal surfaces} In the appendix we present Mathematica code to graph a minimal surface given the Weierstrass data. Whilst there are several packages available to graph such surfaces given an explicit real parametrization, our approach exploits Mathematica's numerical complex integration function to quickly generate the image. This could be of independent use to readers.

 \section{Proof of Theorem \ref{t1}}
 
\begin{proof} 
Here we give the proof of Theorem \ref{t1}, by providing the Mathematica code for the Scherk surface. The reader can easily adapt it for any other surface by modifying the coordinate patch $\sigma$. 
 
\begin{lstlisting}[frame=single]
Clear[gs,ugs, sigma,NS,FI, FII, Shape,eta]
(************************************************************************)
(******************Replace sigma with the surface patch******************)
(*******************of the desired minimal surface***********************)
(****************and lower and upper bounds for u and v******************)
(************************************************************************)

sigma={ArcTan[(2u)/(u^2+v^2-1)],ArcTan[(-2v)/(u^2+v^2-1)],1/2*Log[((u^2-v^2+1)^2+4u^2 v^2)/((u^2-v^2-1)^2+4u^2 v^2)]}; 
u1=-1;
u2=1;
v1=-1;
v2=1;
\end{lstlisting}

\begin{lstlisting}[frame=single]
(************************************************************************)
(****************Modify eta for different test functions*****************)
(******************to destabilize the second variation*******************)
(************************************************************************)
eta=(uv)^2+2; 

(************************************************************************)
(****************Definitions to calculate stability**********************)
(************************************************************************)
(*We define the metric g_{ij}*)
gs[u_,v_]:={{D[sigma,u].D[sigma,u],D[sigma,u].D[sigma,v]},{D[sigma,v].D[sigma,u],D[sigma,v].D[sigma,v]}}
(*We define the metric g^{ij}*)
ugs[u_,v_]:=Inverse[gs[u,v]]
(*This is the Norm of the gradient for a general surface*)
NormRiemannGrad[f_]:=ugs[u,v][[1]][[1]]D[f,u]D[f,u]+ugs[u,v][[1]][[2]]D[f,u]D[f,v]+ugs[u,v][[2]][[1]]D[f,v]D[f,u]+ugs[u,v][[2]][[2]]D[f,v]D[f,v]
(*The following is the normal to the surface*)
NS=(1/Sqrt[Cross[D[sigma,u],D[sigma,v]].Cross[D[sigma,u],D[sigma,v]]])Cross[D[sigma,u],D[sigma,v]]//Simplify;
(*First fundamental form*)
FI={{D[sigma,u].D[sigma,u],D[sigma,u].D[sigma,v]},{D[sigma,v].D[sigma,u],D[sigma,v].D[sigma,v]}}//Simplify;
(*Second fundamental form*)
FII={{D[sigma,{u,2}].NS,D[sigma,u,v].NS},{D[sigma,v,u].NS,D[sigma,{v,2}].NS}}//Simplify;
(*Shape operator*)
Shape=1/Det[FI] FII.FI;
(*This is Gauss curvature K=Det[Shape]. Alternatively, we could have defined it as Det[FII]/Det[FI]*)
S=Det[Shape]//Simplify; 
(*Definition we used in the derivation of the first variation*)
W=Sqrt[Det[gs[u,v]]]//Simplify;
G=NormRiemannGrad[eta]//Simplify;
(*Calculating the intergral, finally*)
NIntegrate[(G+2*S*eta^2)W,{u,u1,u2},{v,v1,v2}]
\end{lstlisting}

Plugging in the given test functions, 
$$
\frac{d^2}{dt^2} \bigg|_{t = 0} A[\varphi^i, t] =\begin{cases} 
-57.04 & \text{when} \ \ i = 1, \\
-0.05 & \text{when} \ \ i = 2, 3. 
\end{cases}
$$
We will now show that index is at least three, by considering the subspace of $C^{\infty}_b(\Omega)$ spanned by $\varphi^i, i = 1,2,3$. Setting
$\psi = \sum_k c_k\varphi^k$ for constants $c_k$,
\begin{align*}
\frac{d^2}{dt^2} \bigg|_{t = 0} A[\psi, t] =& \int_{\Omega}g_{ij}\psi_i\psi_j + 2\kappa\|\psi\|^2\sqrt{|g|} \ du \ dv \\
=& \sum_{k=1}^3 c_k^2\left( \frac{d^2}{dt^2} \bigg|_{t = 0} A[\varphi^k, t]\right) + \sum_{k\neq m = 1}^3 c_kc_m \int_{\Omega} g_{km}\varphi^k_i\varphi^m_j\ du \ dv
\end{align*}
The first term on the right-hand side is negative by the above calculations. The terms 
$$
\int_{\Omega} g_{km}\varphi^k_i\varphi^m_j\ du \ dv
$$
all vanish for $k\neq m$ because $\sigma$ is an isothermal coordinate patch. 
\end{proof}

\section{Enneper surfaces of dihedral type}

In this section we report on our study of the Enneper surface of dihedral type with  $n=5$. For other values of $n$ we observed similar behaviour; this will lead us to make some conjectures concerning the index of such surfaces.

 \begin{defn}\textbf{Bump function}. Let $h:\bf{R}\rightarrow\bf{R}$ be given by 
 $$h(x)= 
 \begin{cases}  
 \frac{\exp \left(-\frac{a}{a^2-(x-b)^2}\right)}{e^{-\frac{1}{a}}} & \text{ if } | x-b| <| \sqrt{a^2}|\\
0& \text{ otherwise}
 \end{cases}.
$$
 This function yields a bell-like curve that has a maximum at $x=b$. Moreover, this curve approaches zero as $x$ approaches $a$ and it is zero outside of $a$.
 \end{defn}

We this adapt our code to enable bump function with \emph{dynamic} variables. When the interested reader adjusts the parameters, the code will automatically recalculate the second variation formula. Thus the reader can explore stability questions for any given surface of interest.

\subsection{The Proof of Theorem \ref{t2}}
\begin{proof}
Evaluating the parametrization given with  $n=5$ gives us the surface patch 
\begin{align*}
\sigma(u,v)=\left(\frac{1}{22} u \left(11 \cos (v)-u^{10} \cos (11 v)\right),-\frac{1}{22} u \left(u^{10} \sin (11 v)+11 \sin (v)\right),\frac{1}{6} u^6 \cos (6 v)\right).
\end{align*}
Here we  emphasize that $u$ and $v$ are polar coordinates. Before continuing, we need to find the maximal embedded piece. We incorporate the following Mathematica code, which is taken from \cite{indiana}.

\begin{lstlisting}[frame=single]
umin[n0_]:= u/.FindRoot[(Cos[u]-Cot[u+2 n0 u] Sin[u]-2 n0 Cot[u+2 n0 u] Sin[u]),{u,Pi/(2n0+1)+.1,2Pi/(2n0+1)-.1}]
vmin[n0_]:= (Csc[u0+2 n0 u0] (-Sin[u0]-2 n0 Sin[u0]))^(1/(2n0))/.u0->umin[n0]
\end{lstlisting}
Next, we pick a bump function:
\begin{lstlisting}[frame=single]
(*Bump function shifted*)
h[x_,a_,b_]:=Piecewise[{{1/E^(-(1/a))*Exp[-a/(a^2-(x-b)^2)],Abs[x-b]<Abs[Sqrt[a^2]]}},0];

(*Shifted on a block (-k1xk2)x(-m1xm2)*)
n0=5

HigherOrderEnneper[n_][u_,v_]:={(u (Cos[v]+2 n Cos[v]-u^(2 n) Cos[v+2 n v]))/(2+4 n),-((u (Sin[v]+2 n Sin[v]+u^(2 n) Sin[v+2 n v]))/(2+4 n)),(u^(1+n) Cos[(1+n) v])/(1+n)};

sigma=HigherOrderEnneper[n0][u,v];
eps=.001;
k1=0;
k2=rmin[n0];
m1=0;
m2=2Pi/(n0+1);
(*To shrink or expand the region we need to center it at zero*)
Manipulate[(*These variables will be updated as we change the dynamic slide*)global={a,b,c,d};
Grid[{{Plot3D[h[u,a,b] h[v,c,d],{u,k1,k2},{v,m1,m2},PlotRange->All]}}],{a,0,Max[Abs[k1],Abs[k2]]},{b,k1+eps,k2-eps},{c,m1+eps,m2-eps},{d,m1+eps,m2-eps}]
Dynamic@global
h[#[[2]],#[[1]],#[[2]]] h[#[[4]],#[[3]],#[[4]]]&/@Dynamic@global

 \end{lstlisting}
Selecting the  appropriate bump function, 
\begin{equation}\label{bfe}
(a,\ b,\ c,\ d) = (2.66,\ 6.46,\ 4.24,\ 4.15),
\end{equation} the stability code can then run with the following test function:
\begin{lstlisting}[frame=single]
{a1,b1,c1,d1}=global;
eta=u*v*h[v,c1,d1];
\end{lstlisting}

\noindent With these parameters the total integral is $-3.6505$, correct to five decimal places. By symmetry considerations, this is also true for any function where $v$ is chosen to lie in the domains $(\frac{k\pi}{3}, \frac{(k+1)\pi}{3})$, $k= 1, \dots, 5$. This yields six  functions, all with disjoint supports, which destabilize have negative second variation. In this setting it is immediate than any linear combination of these functions will also have negative second variation, so the index is at least six. \end{proof}

\section{Numerically calculating both indexes of $\Sigma_1^n(R)$}

Until now we have taken an elementary approach to determining lower bounds for the index via a trial-and-error choice of test functions. In this section we explain how more advanced numerical techniques can be utilized to  actually determine $\mathrm{ind}_c(\Sigma_1^n(R))$ and $\mathrm{ind}_b(\Sigma^n_1(R))$. For ease of exposition the case $n=5$ will again be our focus.

Recall that the quadratic form for the second variation of area is given by 
\begin{align*}
     Q(\varphi,\varphi) =& \int_\Omega (g^{ij} \varphi_i \varphi_j + 2 \kappa \varphi) \sqrt{g} \, du \, dv, \\
     =& \int_{\Sigma} \varphi \  L(\varphi) dV_g \ \ + \int_{\partial \Sigma} \varphi \varphi_{\textbf{N}} \ d\sigma\\
\end{align*}
where $d\sigma$ denotes the induced measure on $\partial\Sigma$.
Determining the stability of a minimal surface $\Sigma$ is equivalent to the knowing the sign of the minimum value of the Rayleigh quotient  $\mathcal{R}(\varphi) = Q(\varphi,\varphi)/\|\varphi\|^2$ over the set of given admissible functions. In calculating $\mathrm{ind}_c{(\Sigma)}$, we observed in the preceding section that $\mathcal{H}^1_0(\Omega)$ formed the admissible class of functions. Thus we reduce the problem to the classical eigenvalue problem   
\begin{equation}\label{prob:dirichlet}
 L \varphi = \lambda \varphi \quad \text{in } \Omega; \quad 
 \varphi = 0 \quad \text{on } \partial\Omega
\end{equation}
for $\varphi$ in $C^2(\Omega) \cap C(\bar{\Omega})$. 
Using the fact that $L$ is a compact elliptic differential operator, we have here chosen an eigenbasis $\varphi_i$ of $\mathcal{H}^1_0(\Sigma)$ such that $\varphi_i = 0$ on $\partial \Omega.$
This follows from the Lax-Milgram Theorem:  weak coercivity of the associated bilinear form  is deduced from  the fact $2\kappa$ is globally bounded from below on $\Sigma_1^5$.

Thus the number of negative eigenvalues (counted with multiplicities) gives $\mathrm{ind}_c(\Sigma)$.  Note taking the size of the support to infinity defines the canonical index of the corresponding complete surface~\cite{chomax}.

A second, natural choice for the class of admissible functions is $\mathcal{F}$.  Lightly  modifying the above arguments, one can choose an eigenbasis $\{\varphi_i \}$ of $L$  in $H^1(\Omega)$ with the property that
$
\varphi_{\textbf{N}} = 0$ on $  \partial \Omega$
under the additional assumption that $\partial \Omega$ is sufficiently smooth.  

Thus in this case we reduce the problem to determining the number of negative eigenvalues of the classical Neumann problem 
\begin{equation}\label{prob:neumann}
 L \varphi = \lambda \varphi \quad \text{in } \Omega; \quad 
 \varphi_{\mathbf{N}} = 0 \quad \text{on } \partial\Omega
\end{equation}
for $\varphi \in C^2(\Omega) \cap C^1(\bar{\Omega})$. We will then check that the eigenfunctions produced actually lie in the subspace $\mathcal{F}\subset \mathcal{H}^1(\Omega)$ and thus  determine $\mathrm{ind}_b(\Sigma^5_1(R))$. Note that Equations \eqref{prob:neumann} and \eqref{prob:dirichlet} only differ in boundary conditions and will produce different values for the index of a given minimal surface.

For portions of $\Sigma_1^n(R)$ covered by the mapping $\sigma|_{\Omega(R)}$, where  $\Omega(R) = (0,R) \times [0,2\pi)$, the boundary value problems \eqref{prob:dirichlet} and \eqref{prob:neumann} become
\begin{equation}\label{eq:EvalPDE}
 \begin{gathered}
 - (\varphi_{uu}
 + u^{-1}\varphi_u 
 + u^{-2}\varphi_{vv})
 -\frac{8 n^2 u^{2 n-2}}{(1+u^{2 n})^2}\varphi
 = \frac{\lambda(1+u^{2 n})^2}{4}\varphi
 \quad
 (u,v) \in \Omega(R),  \\
 \varphi(u,0) = \varphi(u,2\pi), \qquad
 \varphi_v(u,0) = \varphi_v(u,2\pi), \quad u\in(0,R),
 \end{gathered}
\end{equation}
with 
\begin{equation}\label{eq:EvalDirBC}
 \varphi(R,v) = 0, \quad v \in [0,2\pi), 
\end{equation}
or 
\begin{equation}\label{eq:EvalNeuBC}
 \varphi_u(R,v) = 0, \quad v \in [0,2\pi), 
\end{equation}

respectively. By using a Fourier expansion in the $v$-coordinate, both problems can be reduced to a countable set of Sturm-Liouville systems for functions in terms of the radial coordinate; however the corresponding solutions cannot be explicitly found, and instead we proceed by computing the eigenvalues and eigenvectors of these problems numerically. 

To discretize the differential operator $L$ in equation \eqref{eq:EvalPDE}, we employ  the pseudospectral method~\cite{trefethen2000spectral}, which is a exponentially accurate for analytic functions. Specifically, we represent the domain $\Omega(R)$ via the tensor-product grid $(u_i,v_j)$ for $i = 1,\ldots, N_{u+1}$ and $j = 1,\ldots, N_v$, where the $u_i$'s are non-uniformly spaced Chebyshev-polynomial extreme points on $[0,R]$ and the $v_i$'s are equi-spaced points on $[0,2\pi)$. The eigenvectors $\varphi$ are then represented by an $(N_{u+1}\times N_v)$ matrix $\Phi$ of values on this tensor-product grid, and differentiation by $u$ and $v$ can be approximated by the appropriate right and left matrix multiplications of the standard Chebyshev (for $u$) and Fourier (for $v$) differentiation matrices. Upon vectorizing $\Phi$ via $\mathrm{vec}(\Phi)$, which is achieved by stacking the columns of $\Phi$, the operators on the left and right-hand side of the differential equation in~\eqref{eq:EvalPDE} can be represented by the matrices $A$ and $B$ respectively. Hence $L \varphi = \lambda \varphi$ becomes the linear equation $$A \mathrm{vec}(\Phi) = \lambda B \mathrm{vec}(\Phi).$$
In the $v$-direction, the periodic boundary conditions are automatically employed with the Fourier pseudo-spectral discretization, and in the $u$-direction, the Dirichlet and Neumann conditions at $u = R$ are easily enforced with a bordering strategy; however, at $u = 0$ an artificial singularity is caused by the polar grid. Here, as appropriate smoothness of the surface at $u = 0$ is guaranteed for $\Sigma^n_ 1(R)$, l'H\^{o}™pital's rule implies that 
\[
 u^{-1}\varphi_u = \varphi_{uu}, \qquad u^{-2}\varphi_{vv} = {\varphi_{uuvv}}/{2}
\]
as $u \to 0^+$, which upon substituting into \eqref{eq:EvalPDE} leads to the  boundary equation
\[
 - (2\varphi_{uu}
 + {\varphi_{uuvv}}/{2})
 - 8 n^2\varphi
 = \frac{\lambda}{4}\varphi
\]
at $u = 0$. This equation is then used, with $\varphi(0,0) = \varphi(0,v)$ for $v>0$, in a bordering strategy to replace the rows in $A \mathrm{vec}(\Phi) = \lambda B \mathrm{vec}(\Phi)$ corresponding to $u = 0$. Finally, the resulting generalized eigenvalue problem can then be solved with standard methods from numerical analysis. In our work, the algorithms were implemented and run entirely in MATLAB. The MATLAB code for the Dirchlet problem with $n=5$ and $R=3/4$ is as follows:

\small{
\begin{lstlisting}[frame=single]
% -- Distance -- %
n0 = 5; % n defining the dihedral enneper surface
R0 = .75; % radius

% -- Discretization and and differentiation matrices
N = 60; [Du,u] = cheb(N); u = R0*(u+1)/2; Du = (2/R0)*Du;     % u-discretization 

% - v-discretization - %
M = 60; dt = 2*pi/M; v = -pi + dt*(1:M)';
column = [0 .5*(-1).^(1:M-1).*cot((1:M-1)*dt/2)]';
Dv = toeplitz(column,column([1 M:-1:2]));
Dvv = toeplitz([-pi^2/(3*dt^2)-1/6 .5*(-1).^(2:M)./sin(dt*(1:M-1)/2).^2]);

% -- 2D matrix derivatives and identity -- %
Iu = speye(N+1); Iv = speye(M);
II = kron(Iu,Iv); II = sparse(II);          % Identity

% diff matrices
Duuvv = kron(Du^2,Dvv);
Duu = kron(Du^2,Iv); Duv = kron(Du,Dv); Dvv = kron(Iu,Dvv);
Du = kron(Du,Iv); Dv = kron(Iu,Dv);

% -- Tensor product grids -- %
[uu,vv] = meshgrid(u,v); UU = uu; VV = vv;  
uu = uu(:); vv = vv(:);                         % vectorization of grids

% -- Finding the boundary nodes of Domain -- %
BDnA = uu == 0 | uu == R0;      % All bndry point
BDnO = uu == R0;                % Outer boundary bndry pts at r = R0
BDnI1 = uu == 0 & vv == v(end);                 % Interior boundary bndry pts at r = 0 
BDnI2 = uu == 0 & vv ~= v(end);                 % Interior boundary bndry pts at r = 0


% -- Initial input -- %
% discrete surface (x,y,z) with mean curv. lam and volume V0
x =  uu.*((2*n0+1)*cos(vv) - uu.^(2*n0).*cos((2*n0+1)*vv))/(4*n0+2);
y = -uu.*((2*n0+1)*sin(vv) + uu.^(2*n0).*sin((2*n0+1)*vv))/(4*n0+2);
z = uu.^(n0+1).*cos((n0+1)*vv)/(n0+1);

% initial solution for continuation

% -- e-values preamble -- %
EvalNum = 20;                       % Number of e-values to compute

% Coefficients of the first fundamental form

E = (1/4)*(1 + uu.^(2*n0)).^2;
F = 0*uu;
G = (uu.^2).*E;

W = sqrt(E.*G - F.^2); iW = 4./(uu.*(1 + uu.^(2*n0)).^2);
W2 = E.*G - F.^2; iW2 = 1./(E.*G - F.^2);

% Coefficients of the second fundamental form

n1 = (2*uu.^n0.*cos(n0*vv))./(1 + uu.^(2*n0));
n2 = (2*uu.^n0.*sin(n0*vv))./(1 + uu.^(2*n0));
n3 = (-1 + uu.^(2*n0))./(1 + uu.^(2*n0));

e = -n0*uu.^(n0-1).*cos((n0+1).*vv);
f = n0*uu.^n0.*sin((n0+1).*vv);
g = n0*uu.^(n0+1).*cos((n0+1).*vv);

K = -((16*n0^2*uu.^(2*n0-2))./((1 + uu.^(2*n0)).^4));

% --  Tangent -- %
% L = iW.^2.*G.*Duu - 2*iW.^2.*F.*Duv + iW.^2.*E.*Dvv ...
%     + iW.*(Du*(G.*iW) - Dv*(F.*iW)).*Du + iW.*(Dv*(E.*iW) - Du*(F.*iW)).*Dv - 2*K.*II;

L = (4./((1 + uu.^(2*n0)).^2)).*Duu + (4./(uu.*(1 + uu.^(2*n0)).^2)).*Du ...
    + (4./(uu.^2.*(1 + uu.^(2*n0)).^2)).*Dvv - 2*K.*II;

LBndOp = 8*Duu + 2*Duuvv - 2*K.*II;

J = -L; 

J(BDnO,:) = Du(BDnO,:);
J(BDnI1,:) = LBndOp(BDnI1,:);
J(BDnI2,:) = II(BDnI2,:)-II(BDnI1,:);


% precomputation of B for generalized e-value problem Av = mu*Bv;
B = II; 
B(BDnO,:) = 0*II(BDnO,:);
B(BDnI2,:) = 0*II(BDnI2,:);

% --  computing eigenvalues -- %
[EVEC,EVAL] = eigs(J,B,EvalNum,-100);  EVAL = diag(real(EVAL));
[EVAL,iii] = sort(EVAL); EVEC = EVEC(:,iii);

figure(1)
XX = reshape(x,M,N+1); YY = reshape(y,M,N+1); ZZ = reshape(z,M,N+1);
surf([XX(end,:); XX],[YY(end,:); YY], [ZZ(end,:); ZZ])
shading interp,
xlabel x
ylabel y,
zlabel z
daspect([1 1 1])


% -- plotting the eigenvectors of the parameter domain -- %
for i = 1:6
    
    phi = real(EVEC(:,i))/max(abs(real(EVEC(:,i))));
    Phi = reshape(phi,size(UU)); PHI = [Phi(end,:); Phi];
    
%     figure(2)
%     subplot(3,2,i)
%     UUU = [UU(end,:); UU]; VVV = [-VV(end,:); VV];
%     surf(UUU.*cos(VVV),UUU.*sin(VVV),PHI), shading interp
%     xlabel u
%     ylabel v
%     zlabel 'eigenfunction'
%     daspect([1 1 1])
    
    figure(3)
    subplot(3,2,i)
    surf([XX(end,:); XX],[YY(end,:); YY],[ZZ(end,:); ZZ],PHI), shading flat
    daspect([1 1 1])
    xlabel $x$ 'interpreter' 'LaTeX'
    ylabel $y$ 'interpreter' 'LaTeX'
    zlabel $z$ 'interpreter' 'LaTeX'
    daspect([1 1 1])
    title(['$\lamdbda = ' num2str(EVAL(i),'%f $')],'interpreter','LaTeX')
end

eigenvalues = EVAL


function [D,x] = cheb(N)
if N==0, D=0; x=1; return, end
x = cos(pi*(0:N)/N)';
c = [2; ones(N-1,1); 2].*(-1).^(0:N)';
X = repmat(x,1,N+1);
dX = X-X';
D  = (c*(1./c)')./(dX+(eye(N+1)));      % off-diagonal entries
D  = D - diag(sum(D'));                 % diagonal entries

end


\end{lstlisting}
}

All the code for the subsequent calculations will be omitted from this paper, as they are similar to the code just presented. For the reader's convenience they are available on the authors' websites.  
Figure~\ref{fig:diEn_DirEvec} shows the results of the program run for the Dirichlet problem---i.e., Equations \eqref{eq:EvalPDE} with \eqref{eq:EvalDirBC}---for Enneper's surface with $n = 5$ and  radius $R = 3/4$. Density plots, superimposed on the minimal surface, are given for the first 6 eigenmodes, whose eigenvalues are approximately equal to $39.4193$, $100.5757$, $100.5757$, $181.2596$, $181.2596$ and $210.4917$. Note that two eigenvalues have geometric multiplicity 2, and the first eigenvalue is simple and positive; hence, all of the other eigenvalues must be strictly positive, and the index of this portion of the surface is zero. 

\begin{figure}[!htb]
\begin{center}
 \includegraphics[width=1\textwidth]{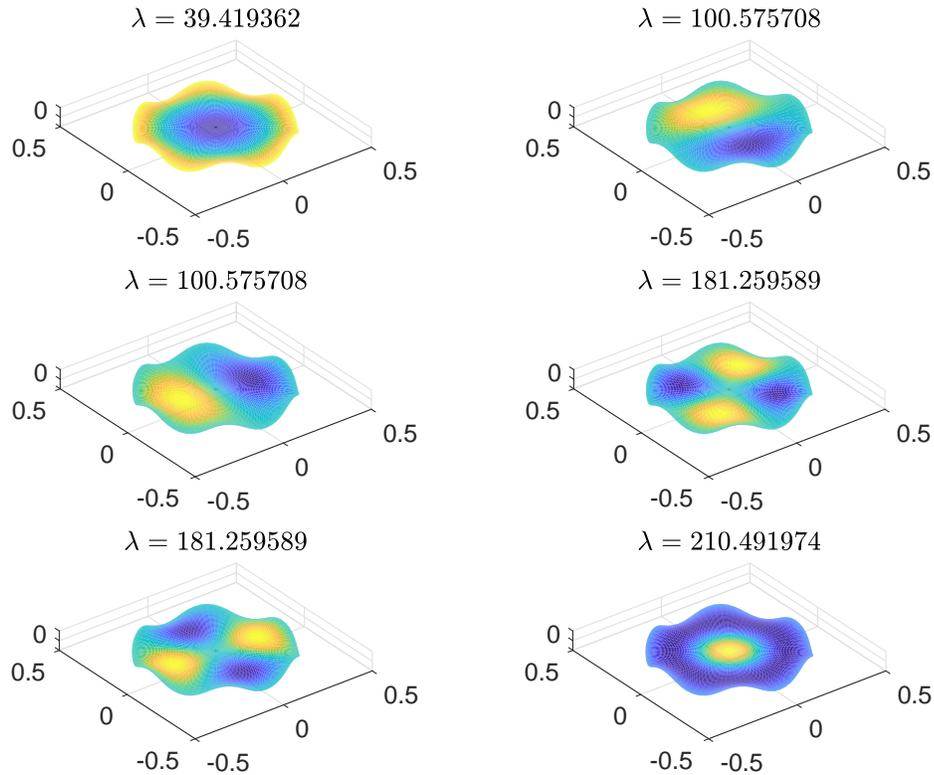}
\caption{Density plot, superimposed on the corresponding Enneper surface, of the first six eigenmodes of \eqref{eq:EvalPDE} with \eqref{eq:EvalDirBC} for $n = 5$ and $R = 3/4$. The eigenvalues $\lambda$ of each mode are additionally displayed.}
\label{fig:diEn_DirEvec}
\end{center}
\end{figure}

As $R$ increased, the index of the surface changes, and these changes are tracked in Figure~\ref{fig:diEn_DirIndex}. The left panel plots the first 11 eigenvalues as a function of the radius $R$ for $R$ between $3/4$ and $R = 1.16169\ldots$, which is the maximal value for which the surface remains embedded. The right panel displays $\mathrm{ind}_c(\Sigma_1^5(R))$, the index of the surface for compact perturbations, for increasing $R$, which means that region of the support of the perturbations is growing. Observe that the minimal eigenvalue has geometric multiplicity $1$ since the index jumps from $0$ to $1$ at $R = 1$. 

Note that care must be taken in increasing the radius $R$ in the code, and to get same order of accuracy seen for small values of $R$, the number of discretization points must be increased as $R$ increases. A good rule of thumb is to make sure there are at least 20 discretization points per unit interval. For the Enneper surface large values of $R$ are not necessary, and computing up to $R = 1.4$ is sufficient for determining its index since the solutions to the Jacobi equation can be computed exactly. This is done by setting $\lambda = 0$, solving the resulting partial differential equation using separation of variables, and finding the roots of a finite degree polynomial in $R$, which depends on the given boundary conditions. These roots determine where the eigenvalues change sign; see~\cite{ft} for a similar computation.

\begin{figure}[t!]
\begin{center}
 \includegraphics[width=1\textwidth,trim={0 3.2cm 0 3.2cm},clip]{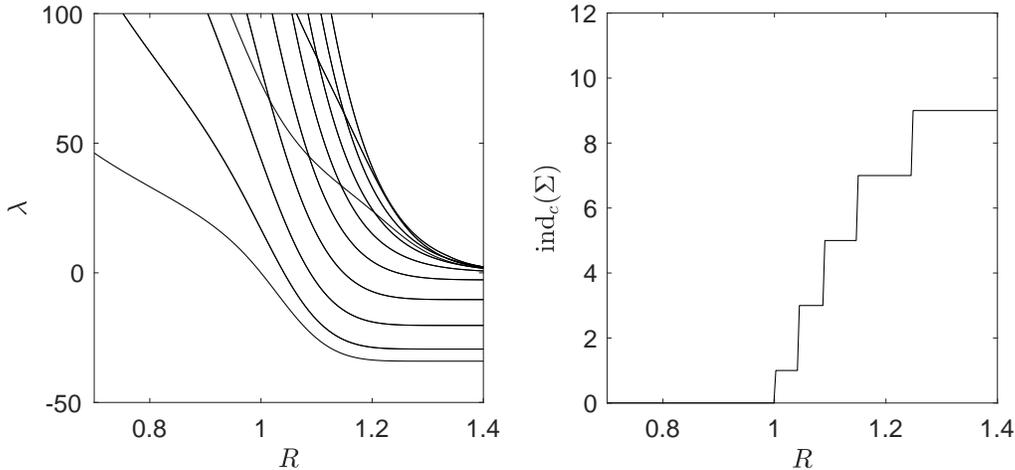}
\caption{Plot, for varying radius $R$, of the eleven smallest eigenvalues of problem \eqref{eq:EvalPDE} with \eqref{eq:EvalDirBC} for $n = 5$ and the corresponding index $\mathrm{ind}_c(\Sigma_1^5)$.}
\label{fig:diEn_DirIndex}
\end{center}
\end{figure}

Figure~\ref{fig:diEn_NeuEvec} shows the results of the program run for the Neumann problem---i.e., Equations \eqref{eq:EvalPDE} with \eqref{eq:EvalNeuBC}---for the same $n$ and $R$ as Figure~\ref{fig:diEn_DirEvec}. Density plots are again displayed for the first 6 eigenmodes, however the corresponding eigenvalues, as expected, are different. Starkly different is the fact that the minimal eigenvalue is already negative. Moreover, the minimum eigenvalue is negative all values of $r$, so the index of the surface, defined by the Neumann problem, is always positive.

How the index changes with varying $R$ for problem \eqref{eq:EvalPDE} with \eqref{eq:EvalNeuBC} is shown Figure~\ref{fig:diEn_NeuIndex}, where the left and right panels, respectively, plot the first 11 eigenvalues and index $\mathrm{ind}_b(\Sigma_1^5)$ as functions of $R$. Again, these plots show only up to the value of $r$ where the surface remains embedded, i.e., $R = 1.16169\ldots$, but this region of $R$ is sufficient to capture the behavior of the index since no more eigenvalue crossings occur for larger values $R$. In turn, $\mathrm{ind}_b(\Sigma_1^5)$ stays equal to $11$ as $R \to \infty$. 

In comparing the results displayed in Figure~\ref{fig:diEn_NeuIndex} to those in Figure~\ref{fig:diEn_DirIndex}, it is apparent that the related indices are indeed different since $\mathrm{ind}_b(\Sigma_1^5)$ is strictly greater than  $\mathrm{ind}_c(\Sigma_1^5)$ for all $R$. In fact, although less widely used, in this situation the Neumann index seems to more closely mimic the theory of complete minimal surface, and in particular Berstein's theorem, since portions of $\Sigma_1^5$ are always unstable for the Neumann index but stable for values of $R < 1$ when considering the Dirichlet index.

\begin{figure}[!htb]
\begin{center}
 \includegraphics[width=1\textwidth]{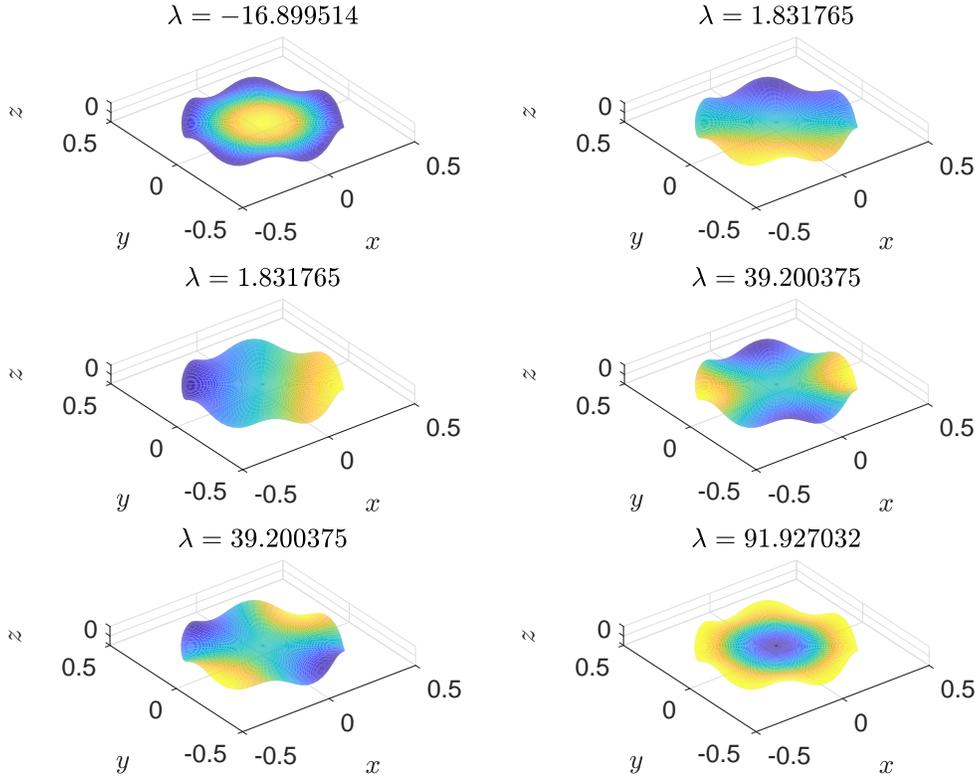}
\caption{Density plot, superimposed on the corresponding Enneper surface, of the first six eigenmodes of \eqref{eq:EvalPDE} with \eqref{eq:EvalNeuBC} for $n = 5$ and $R = 3/4$. The eigenvalues $\lambda$ are also displayed.}
\label{fig:diEn_NeuEvec}
\end{center}
\end{figure}

\begin{figure}[!htb]
\begin{center}
 \includegraphics[width=.8\textwidth,trim={0 3.2cm 0 3.2cm},clip]{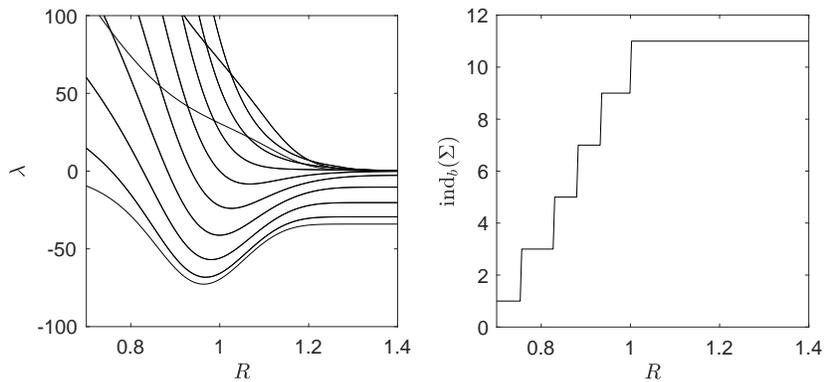}
\caption{Plot, for varying radius $R$, of the eleven smallest eigenvalues of problem \eqref{eq:EvalPDE} with \eqref{eq:EvalNeuBC} for $n = 5$ and  the corresponding index $\mathrm{ind}_b(\Sigma_1^5)$.}
\label{fig:diEn_NeuIndex}
\end{center}
\end{figure}

To emphasize the differences between the indices, observe that $\mathrm{ind}_c(\Sigma_1^{n}(R))$ is a non-decreasing function of $R$ because one can trivially extend any eigenfunction on a ball of fixed radius to any ball with larger radius. This is not necessarily true for $\mathrm{ind}_c(\Sigma_1^{n}(R))$. For example, running the code with $n=11$ and varying $R$ yields the plot shown in Figure~\ref{fig:diEn_NeuIndexN11}.

\begin{figure}[!htb]
\begin{center}
 \includegraphics[width=.8\textwidth,trim={0 3.2cm 0 3.2cm},clip]{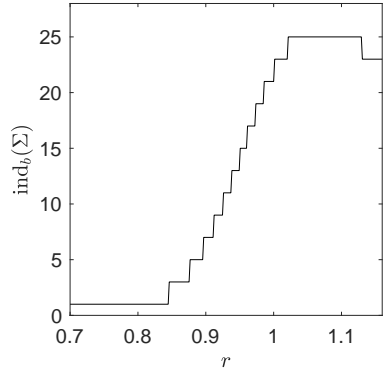}
\caption{The index $\mathrm{ind}_b(\Sigma_1^{11}(R))$ for varying radius $R$, which is not monotonic.}
\label{fig:diEn_NeuIndexN11}
\end{center}
\end{figure}

\vspace{2in}

\section*{Appendix}
\subsection*{ Graphing from the Enneper-Weirstrauss Data}
The following is a code to genereate the image of the  minimal surface with the Enneper-Weirstrauss formalism. We found it to be easy to utilize to quickly graph a minimal surface.  
\begin{lstlisting}[frame=single]
Clear[f, g, x1, y1, z1];
(*Modify for different minimal surface Weierstrauss data*)
f[w_] = 1;
g[w_] = Sqrt[w];

x1[w_] = Integrate[f[w] (1 - g[w]^2)/2, w];
x1[w_] = x1[w] - x1[0];
y1[w_] = Integrate[I*f[w] (1 + g[w]^2)/2, w];
z1[w_] = Integrate[f[w] g[w], w];
ParametricPlot3D[
 Re[{x1[r*Exp[I*\[Theta]]], y1[r*Exp[I*\[Theta]]], 
   z1[r*Exp[I*\[Theta]]]}], {r, 0, 10}, {\[Theta], 0, 2 Pi}, 
 PlotStyle -> Directive[Gray, Specularity[1, 20]], Axes -> None, 
 Mesh -> True, Boxed -> False]
sigma = Re[{x1[r*Exp[I*\[Theta]]], y1[r*Exp[I*\[Theta]]], 
    z1[r*Exp[I*\[Theta]]]}] // Simplify
\end{lstlisting}

\end{document}